\newcommand{\cst}{\texttt{c{\!}s{\!}t}\,}
\renewcommand{\d}{\partial}
\newcommand{\eps}{\varepsilon}
\let\oddtheta\theta
\let\theta\vartheta
\let\vartheta\oddtheta
\let\oddphi\phi
\let\phi\varphi
\let\varphi\oddphi
\let\oddrho\rho
\let\rho\varrho
\let\varrho\oddrho
\newcommand{\N}{\mathbb N}
\newcommand{\R}{\mathbb R}
\renewcommand{\L}{\mathbb L}
\newcommand{\E}{\mathbb E} 
\renewcommand{\P}{\mathbb P} 
\newcommand{\cB}{\mathcal B}
\newcommand{\cD}{\mathcal D}
\newcommand{\cF}{\mathcal F}
\newcommand{\cL}{\mathcal L}
\newcommand{\cN}{\mathcal N}
\DeclareMathOperator{\diag}{diag}
\newcommand{\abs}[1]{\left|#1\right|}
\newcommand{\norm}[1]{\left\|#1\right\|}
\newcommand{\inpro}[2]{\left\langle#1,#2\right\rangle}
\newtheorem{thm}{Theorem}[section]
\newtheorem{lem}[thm]{Lemma}
\theoremstyle{definition}
\newtheorem{example}[thm]{Example}
\newtheorem{remark}[thm]{Remark}
\title{Local Asymptotic Normality for Shape and Periodicity in the Drift of a Time Inhomogeneous Diffusion\footnote{The final publication is available at Springer via http://dx.doi.org/10.1007/s11203-017-9157-5.}}
\author{Simon Holbach\footnote{Institut f{\"u}r Mathematik, Johannes Gutenberg-Universit{\"a}t Mainz, Staudingerweg 9, 55099 Mainz, Germany, e-mail: s.holbach$@$uni-mainz.de}}
\begin{document}
\maketitle
\begin{abstract}
We consider a one-dimensional diffusion whose drift contains a deterministic periodic signal with unknown periodicity $T$ and carrying some unknown $d$-dimensional shape parameter $\theta$. We prove Local Asymptotic Normality (LAN) jointly in $\theta$ and $T$ for the statistical experiment arising from continuous observation of this diffusion. The local scale turns out to be $n^{-1/2}$ for the shape parameter and $n^{-3/2}$ for the periodicity which generalizes known results about LAN when either $\theta$ or $T$ is assumed to be known.
\end{abstract}

\small{\textbf{Keywords:} local asymptotic normality, parametric signal estimation, periodic diffusion

\textbf{AMS 2010 subject classification: } 62F12, 60J60}

\section{Introduction}

The center of our study is a one-dimensional diffusion $\xi$ following the stochastic differential equation
\begin{equation}
	d\xi_t = [S_{(\theta,T)}(t)+b(\xi_t)]dt + \sigma(\xi_t)dW_t, \quad t\in[0,\infty),
\label{diffusion}
\end{equation}
where $W$ is a one-dimensional Standard Brownian Motion, $b,\sigma\colon\R\to\R$ are measurable drift and volatility functions and $S_{(\theta,T)}\colon\R\to\R$ is a continuous signal that is parametrized by its periodicity $T$ and a $d$-dimensional shape parameter $\theta$. Taking $b\equiv0$, $\sigma\equiv1$ leads to the classical 'signal in white noise' model, which arises in a wide variety of fields including communication, radiolocation, seismic signal processing or computer-aided diagnosis and has been the subject of extensive study. For this special case, Ibragimov and Khasminskii (\cite{Ibra}) proved LAN with rate $n^{-3/2}$ for a smooth signal with known $\theta$ and discussed asymptotic efficiency for certain estimators. Golubev extended their approach with $\L^2$-methods in order to estimate $T$ at the same rate for unknown shape, which in turn was the basis for Castillo, L\'{e}vy-Leduc and Matias (\cite{CLM}) for nonparametric estimation of the shape under unknown $T$. For our more general diffusion \eqref{diffusion}, we will stay within the confines of parametric estimation. Our main assumptions are some $\L^2$-smoothness of the signal with respect to the parameters and positive Harris recurrence of the grid chain $(\xi_{nT})_{n\in\N}$. We prove LAN for the sequence of statistical experiments corresponding to continuous observation of $\xi$ over large time intervals with unknown $\theta$ and $T$. H{\"o}pfner and Kutoyants have solved this problem both for known $T$ with unknown $\theta$ (\cite{HK1}) and for known $\theta$ with unknown $T$ (\cite{HK3}). Our result extends both of these and allows for application to simultaneous estimation of shape and periodicity, as under LAN we can use H\'{a}jek's Convolution Theorem and the Local Asymptotic Minimax Theorem in order to establish optimality for estimators, when the rescaled estimation errors are stochastically asymptotically equivalent to the central statistic of the experiment (see \cite{LeCam}, \cite {Davies}, \cite{Kut} or \cite{HoBo} for a detailed presentation of the relevant theory).

\section{Precise Assumptions and Results}

Now we will give and explain the exact setting in which we would like to work in this paper. Let $\Theta\subset\R^d$ be an open set. First, consider the following basic hypotheses:
\begin{itemize}
	\item[(H1)] For each $(\theta,T)\in\Theta\times(0,\infty)$, the equation \eqref{diffusion} has a unique strong solution.
	\item[(H2)] $\sigma$ is bounded away from zero.
\end{itemize}
We write $\P^{(\theta,T)}$ for the law on $C([0,\infty))$ under which the canonical process $(\eta_t)_{t\ge 0}$ is the solution of \eqref{diffusion} issued from some fixed and deterministic starting point $\xi_0 \in \R$ with the parameters $(\theta,T)\in\Theta\times(0,\infty)$. We define
\[
	\cF_t:=\sigma(\eta_s \,|\, 0\le s\le t+):=\bigcap_{r>t} \sigma(\eta_s \,|\, 0\le s\le r),
\]
the $\sigma$-algebra generated by observation of $\eta$ up to time $t+$, $t\ge0$. Note that the drift coefficient of \eqref{diffusion} depends on time and on the parameter $(\theta,T)\in\Theta\times(0,\infty)$, while the diffusion coefficient depends on neither. Therefore we can use \cite[Theorem 6.10]{HoBo} to calculate the log-likelihood-ratio
\[
	 \Lambda_t^{(\tilde\theta,\tilde T)/(\theta,T)}:= \log\left(\frac{d\P^{(\tilde\theta,\tilde T)}|_{\cF_t}}{d\P^{(\theta,T)}|_{\cF_t}}\right)= \int_0^t \frac{S_{(\tilde\theta,\tilde T)}(s)-S_{(\theta,T)}(s)}{\sigma(\eta_s)}dW_s - \frac{1}{2}\int_0^t \left(\frac{S_{(\tilde\theta,\tilde T)}(s)-S_{(\theta,T)}(s)}{\sigma(\eta_s)}\right)^2ds.
\]
Our goal is to prove local asymptotic normality for the sequence of experiments given by
\[
	\left(C([0,\infty)), \cF_n, \left\{ \P^{(\theta,T)}|_{\cF_n} \, \middle| \, (\theta,T) \in \Theta\times(0,\infty) \right\}\right), \quad n \in \N,
\]
and to that end we will now give more precise smoothness assumptions on the deterministic signal. 
\begin{enumerate}
	\item[(S1)] For each $\theta \in \Theta$, we have a 1-periodic function $S_\theta \in C^2([0,\infty))$.
	\item[(S2)] $S_\cdot(s) \in C^1(\Theta)$ for each $s \in [0,\infty)$.
	\item[(S3)] $\nabla_\theta S_\theta(\cdot) \in (\L^2_{\text{loc}}(0,\infty))^d$ for each $\theta\in\Theta$.
	\item[(S4)] The mapping
				\[
					S \colon \Theta \times (0,\infty) \to \L^2_{\text{loc}}(0,\infty), \quad (\theta,T) \mapsto S_{(\theta,T)}:=S_\theta\left(\frac{\cdot}{T}\right)
				\]
				is $\L^2_{\text{loc}}$-differentiable with derivative
				\[
					\dot{S} \colon \Theta \times (0,\infty) \to (\L^2_{\text{loc}}(0,\infty))^{d+1}, \quad (\theta,T) \mapsto \dot{S}_{(\theta,T)}:=\left(\begin{array}{c}\d_{\theta_1}S_{(\theta,T)} \\ \vdots \\ \d_{\theta_d}S_{(\theta,T)} \\ \d_T S_{(\theta,T)} \end{array}\right)
				\]	
				in the sense that for every $t>0$ and $(\theta,T) \in \Theta \times (0,\infty)$ we have
				\[
					\int_0^t \left(\frac{S_{(\theta,T)}(s)-S_{(\tilde\theta,\tilde T)}(s)-((\theta,T)-(\tilde\theta,\tilde T))^\top \dot{S}_{(\theta,T)}(s)}{\abs{(\theta,T)-(\tilde\theta,\tilde T)}} \right)^2ds \to 0, \text{ as } (\tilde\theta,\tilde T) \to (\theta,T).
				\]
	\item[(S5)] $\dot{S}$ is $\L^2_{\text{loc}}$-continuous in the sense that for all $t>0$ and $(\theta,T) \in \Theta \times (0,\infty)$ we have
				\[
					\int_0^t \abs{\dot{S}_{(\theta,T)}(s)-\dot{S}_{(\tilde\theta,\tilde T)}(s)}^2ds \to 0, \text{ as } (\tilde\theta,\tilde T) \to (\theta,T).
				\]
	\item[(S6)] The mapping $(0,\infty) \ni T \mapsto \nabla_\theta S_{(\theta,T)} \in (\L^2_{\text{loc}}(0,\infty))^d$ satisfies the following local H{\"o}lder condition: For each $\theta \in \Theta$ and $T>0$ there are $\alpha \in (0,2]$ and $\beta\in[0,1+3\alpha/2)$ such that for suitable $\eps>0$ and $t_0\ge0$
\[
	\int_{t_0}^t \abs{\nabla_\theta S_{(\theta,T)}(s) - \nabla_\theta S_{(\theta,\tilde T)}(s)}^2 ds \le Ct^\beta\abs{T-\tilde T}^\alpha
\]
for all $t>t_0$, $\tilde T \in (T-\eps,T+\eps)$ and some constant $C$ that does not depend on $\tilde T$ or $t$.
\end{enumerate}

\begin{remark} 1.) We observe that if (S1) - (S3) hold and $\dot S_{(\theta,T)}(s)$ is continuous (and thus also locally bounded) in $\theta$, $T$ and $s$, (S4) and (S5) are immediate by dominated convergence. Note that in general, (S1) - (S3) do not require that for example $\d_{\theta_1}S_{(\theta,T)}(s)$ is continuous (or even locally bounded) in $T$ or $s$.

2.) If for every $\theta \in \Theta$, $T>0$ and $t>0$ there are $\delta=\delta(\theta,T) \in (0,1]$ and $C(\theta,t) \le \cst t^\zeta$ with $\zeta \in [0,\delta/2)$ such that the mapping $[0,\infty) \ni s \mapsto \nabla_\theta S_\theta(s)$ is H{\"o}lder-$\delta$-continuous on $[0,t]$ with H{\"o}lder-constant $C(\theta,t)$, we get that for sufficiently small $\eps>0$ and for all $\tilde T \in (T-\eps,T+\eps)$
\begin{align*}
	\int_0^{t} \abs{\nabla_\theta S_{(\theta,\tilde T)}(s)-\nabla_\theta S_{(\theta,T)}(s)}^2ds & = \int_0^{t} \abs{\nabla_\theta S_\theta\left(\frac{s}{\tilde T}\right)-\nabla_\theta S_\theta\left(\frac{s}{T}\right)}^2ds \\
	&\le \sup_{T'\in(T-\eps,T+\eps)}C\left(\theta,\frac{t}{T'}\right) \int_0^{t} \abs{\frac{s}{\tilde T}-\frac{s}{T}}^{2\delta(\theta,T)}ds \\
	& \le \cst \left(\frac{t}{T-\eps}\right)^{2\zeta} \left(\frac{\abs{\tilde T-T}}{(T-\eps)^2}\right)^{2\delta} \int_0^ts^{2\delta}ds,	
\end{align*}
which implies the H{\"o}lder condition (S6) with $\alpha=2\delta$ and $\beta=2(\delta+\zeta)+1 \in [0,1+3\alpha/2)$.

3.) As a consequence of the two preceding remarks, all of the hypotheses (S1) - (S6) are fulfilled if the mapping $\Theta\times[0,\infty) \ni (\theta,s) \mapsto S_\theta(s)$ is in $C^2_b(\Theta\times[0,\infty))$. Existence and boundedness of $\d_s\nabla_\theta S_\theta(s)$ ensure that we can choose $\delta=1$ and $\zeta=0$ above.

4.) Let $S_\theta(s)=f(\theta,\phi(s))$ with $\phi \in C^2([0,\infty))$ $1$-periodic and $f \in C^{1,2}(\Theta\times\R)$. In particular, we have (S1) - (S3). Write $\nabla f=(f_1,\ldots,f_{d+1})$. Since
\[
	\dot S_{(\theta,T)}(s)=\left(\begin{array}{c}f_1\left(\theta,\phi\left(\frac{s}{T}\right)\right) \\ \vdots \\ f_d\left(\theta,\phi\left(\frac{s}{T}\right)\right) \\ f_{d+1}\left(\theta,\phi\left(\frac{s}{T}\right) \right) \phi'\left(\frac{s}{T}\right) \left(\frac{-s}{T^2}\right) \end{array}\right)
\]
is obviously continuous in $\theta$, $T$ and $s$, so we also have (S4) and (S5). Moreover we see that the H{\"o}lder property in 2.) is fulfilled if and only if it is fulfilled by the mapping $[0,\infty) \ni s \mapsto (f_1,\ldots,f_d)(\theta,s)$. So in that case all of the hypotheses (S1) - (S6) hold.

5.) A special case of the preceding example is a product structure $S_\theta(s)=g(\theta)\phi(s)$ with $\phi \in C^2([0,\infty))$ $1$-periodic and $g \in C^1(\Theta)$. As for all $s, \tilde s \in [0,\infty)$ we have
\[
	\abs{\nabla_\theta S_\theta(s)-\nabla_\theta S_\theta(\tilde s)} \le \abs{\nabla g(\theta)} \norm{\phi'}_\infty \abs{s-\tilde s},
\]
no further conditions are needed to ensure the H{\"o}lder property in 2.) to hold with $\delta=1$ and $\zeta=0$.

6.) Choosing $\phi(s)=\sin(2k\pi s)$ or $\phi(s)=\cos(2k\pi s)$ with $k \in \N_0$ in the above example and observing that our hypotheses are stable under linear combinations, we see that all of them are fulfilled for signals of the form
\[
	S_{(\theta,T)}(s)=\sum_{k=1}^l \left( g_k(\theta)\sin\left(\frac{2k\pi s}{T}\right) + h_k(\theta)\cos\left(\frac{2k\pi s}{T}\right) \right)
\]
with $l \in \N_0$ and $g_k,h_k \in C^1(\Theta)$ for all $k \in\{1,\ldots,l\}$.


\end{remark}

Let us now fix $(\theta,T)\in\Theta\times(0,\infty)$. As a consequence of the periodic structure in the drift term of the diffusion \eqref{diffusion}, its transition semi-group $\left(P^{(\theta,T)}_{s,t}\right)_{0\le s< t}$ under $\P^{(\theta,T)}$ has the property
\[
	P^{(\theta,T)}_{s+kT,t+kT}=P^{(\theta,T)}_{s,t} \quad \text{for all $t>s\ge 0$ and $k\in\N$.}
\]
Thus the grid chain $\left(\xi_{kT}\right)_{k\in\N_0}$ is a time homogeneous Markov chain with one-step transition kernel $P^{(\theta,T)}_{0,T}$. We suppose:
\begin{itemize}
	\item[(H3)] The grid chain under $\P^{(\theta,T)}$ is positive recurrent in the sense of Harris with invariant probability measure $\mu^{(\theta,T)}$.
\end{itemize}
Verifiable criteria for this condition can be found e.g. in \cite{HL1}, a specific example will be given at the end of this article.

The ergodicity assumption (H3) allows us to make use of certain variants of classical Limit Theorems (see \cite{HK2}, \cite{HK3}), which we will need for Lemma \ref{H(s)} below. With \cite[Lemma 2.1]{HK2} in mind, we define the measure
\begin{equation}
	\nu^{(\theta,T)}(ds)=\mu^{(\theta,T)} P^{(\theta,T)}_{0,sT}(\sigma^{-2}) ds \quad \text{ on $\cB((0,1))$,}
\label{measure}
\end{equation}
which is finite, as $\mu^{(\theta,T)}$ is finite and $\sigma$ is bounded away from zero by (H2). We write $\inpro{\cdot}{\cdot}_{\nu^{(\theta,T)}}$ for the standard inner product in $\L^2(\nu^{(\theta,T)})$. For each $t\ge0$ define the symmetric $(d+1)\times(d+1)$-dimensional block matrix
\begin{equation}
F_{(\theta,T)}(t):= \begin{pmatrix}
	t \left(\inpro{\d_{\theta_i}S_\theta}{\d_{\theta_j}S_\theta}_{\nu^{(\theta,T)}}\right)_{i,j=1,\ldots,d} & -\frac{t^2}{2T^2}\left(\inpro{\d_{\theta_i}S_\theta}{S_\theta'}_{\nu^{(\theta,T)}}\right)_{i=1,\ldots,d} \\
	 \cdots & \frac{t^3}{3T^4} \inpro{S_\theta'}{S_\theta'}_{\nu^{(\theta,T)}}
\end{pmatrix}.
\label{Matrix}
\end{equation}
Its derivative with respect to $t$,
\begin{align*}
F_{(\theta,T)}'(t)&=\begin{pmatrix}
	\left(\inpro{\d_{\theta_i}S_\theta}{\d_{\theta_j}S_\theta}_{\nu^{(\theta,T)}}\right)_{i,j=1,\ldots,d} & -tT^{-2}\left(\inpro{\d_{\theta_i}S_\theta}{S_\theta'}_{\nu^{(\theta,T)}}\right)_{i=1,\ldots,d} \\
	 \cdots & t^2 T^{-4}\inpro{S_\theta'}{S_\theta'}_{\nu^{(\theta,T)}}
\end{pmatrix} \\
&=\nu^{(\theta,T)}\left[\left(\begin{array}{c} \nabla_\theta S_\theta \\ -tT^{-2} S_\theta' \end{array}\right) \left(\begin{array}{c} \nabla_\theta S_\theta \\ -tT^{-2} S_\theta' \end{array}\right)^{\!\!\!\!\!\top} \right]
\end{align*}
is obviously symmetric and nonnegative-definite. We suppose that
\begin{itemize}
	\item[(S7)] $F_{(\theta,T)}'(t)$ is invertible for all $t>0$.
\end{itemize}
Note that (S7) is equivalent to linear independence of $\d_{\theta_1}S_\theta,\ldots,\d_{\theta_d}S_\theta,S'_\theta$ in $\L^2(\nu^{(\theta,T)})$.

\begin{example}\label{signal}
1.) Let $\sigma\equiv 1$, then $\nu^{(\theta,T)}$ is just Lebesgue's measure. Considering once again signals of the form
\[
	S_{(\theta,T)}(s)=\sum_{k=1}^l \left( g_k(\theta)\sin\left(\frac{2k\pi s}{T}\right) + h_k(\theta)\cos\left(\frac{2k\pi s}{T}\right) \right)
\]
with $l \in \N_0$ and $g_k,h_k \in C^1(\Theta)$ for all $k \in\{1,\ldots,l\}$, an elementary calculation yields
\[
	\left(F'_{(\theta,T)}\right)_{i,j}(t)=\begin{cases} \frac{1}{2} \sum_{k=1}^l\left(g_k^{(i)}(\theta)g_k^{(j)}(\theta)+ h_k^{(i)}(\theta)h_k^{(j)}(\theta)\right), & (i,j) \in \{1,\ldots,d\}^2, \\
																				-\pi tT^{-2} \sum_{k=1}^lk\left(g_k(\theta)h_k^{(j)}(\theta)- g_k^{(j)}(\theta)h_k(\theta)\right), & i=d+1, \, j \in \{1,\ldots,d\}, \\
																				2\pi^2t^2 T^{-4} \sum_{k=1}^lk^2\left(g_k(\theta)^2+ h_k(\theta)^2\right), & i=j=d+1, \end{cases}
\]
where a superscript $(i)$ indicates partial derivation with respect to $\theta_i$. Note that in the case that either the coefficients of the $\sin$-terms or those of the $\cos$-terms vanish identically, this matrix is invertible if and only if $\left(F'_{(\theta,T)}\right)_{d+1,d+1}(t)>0$ and $\d_{\theta_1}S_\theta,\ldots,\d_{\theta_d}S_\theta$ are linearly independent in $\L^2(\nu)$. In particular this is ensured if $l=d$ and if for all $k,i \in \{1,\ldots,d\}$ we have $h_k\equiv0$ and
\[
	g_k^{(i)}(\theta)\neq0 \text{ for $i=k$} \quad \text{and} \quad g_k^{(i)}(\theta)=0 \text{ else.}
\]
A simple example would be $g_k(\theta)=\theta_k$.

2.) Similarly, if the signal is of the form
\[
	S_{(\theta,T)}(s)=\sum_{k=1}^d \theta_k \phi_k\left(\frac{s}{T}\right),
\]
where $\phi_1,\ldots,\phi_d$ are orthonormal in $\L^2(\nu^{(\theta,T)})$, we have
\[
F_{(\theta,T)}'(t)=\begin{pmatrix}
	\mathbbm{1}_{d\times d} & -tT^{-2}\left(\sum_{j=1}^d\theta_j \inpro{\phi_i}{\phi_j'}_{\nu^{(\theta,T)}}\right)_{i=1,\ldots,d} \\
	 \cdots & t^2 T^{-4}\sum_{i,j=1}^d\theta_i\theta_j \inpro{\phi_i'}{\phi_j'}_{\nu^{(\theta,T)}}
\end{pmatrix},
\]
which is invertible whenever
\[
	\sum_{i,j=1}^d\theta_i\theta_j \inpro{\phi_i'}{\phi_j'}_{\nu^{(\theta,T)}} \neq \sum_{i,j=1}^d\left(\theta_j \inpro{\phi_i}{\phi_j'}_{\nu^{(\theta,T)}}\right)^2.
\]
\end{example}

\begin{thm}[Local asymptotic normality]\label{LAN}
Fix $(\theta,T)\in\Theta\times(0,\infty)$ and grant all of the hypotheses (H1) - (H3) and (S1) - (S7). Fix any bounded sequence $(h_n)_{n \in \N} \subset \R^{d+1}$ and set $(\theta_n,T_n):=(\theta,T)+\delta_n h_n$ with the local scale
\[
	\delta_n := \diag\left(n^{-1/2},\ldots,n^{-1/2},n^{-3/2}\right) \in \R^{(d+1)\times(d+1)} \quad \text{for all $n\in\N$.} 
\]
Then we have LAN
\[
	\Lambda_n^{(\theta_n,T_n)/(\theta,T)}=h_n^\top \Delta^{(\theta,T)}_n-\frac12 h_n^\top F^{(\theta,T)}h_n + o_{\P^{(\theta,T)}}(1), \quad n \to \infty,
\]
with Fisher Information $F^{(\theta,T)}=F_{(\theta,T)}(1)$ as introduced in \eqref{Matrix} and score
\[
	\Delta^{(\theta,T)}_n=\delta_n \int_0^n \frac{\dot{S}_{(\theta,T)}(s)}{\sigma\left(\eta_s\right)}dW_s \quad \text{for all $n\in\N$,} 
\]
such that weak convergence
\[
	\cL\left(\Delta^{(\theta,T)}_n\middle|\P^{(\theta,T)}\right) \xrightarrow{n\to\infty} \cN\left(0,F_{(\theta,T)}\right)
\]
holds.
\end{thm}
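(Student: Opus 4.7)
The plan is to start from the explicit formula for $\Lambda_n^{(\theta_n,T_n)/(\theta,T)}$ given in the excerpt and decompose it as $\Lambda_n = M_n - \tfrac12 Q_n$, where
\[
M_n := \int_0^n \frac{D_n(s)}{\sigma(\eta_s)}\,dW_s, \qquad Q_n := \int_0^n \frac{D_n(s)^2}{\sigma(\eta_s)^2}\,ds, \qquad D_n(s) := S_{(\theta_n,T_n)}(s)-S_{(\theta,T)}(s).
\]
The three statements I would establish are: (i) the $\L^2$-linearization $\int_0^n R_n(s)^2\,ds \to 0$ with remainder $R_n(s):=D_n(s)-(\delta_n h_n)^\top\dot S_{(\theta,T)}(s)$; (ii) $Q_n \to h_n^\top F^{(\theta,T)} h_n$ in $\P^{(\theta,T)}$-probability; and (iii) $\Delta_n^{(\theta,T)}$ converges weakly to $\cN(0,F^{(\theta,T)})$. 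Using (H2) with It{\^o}'s isometry, (i) yields $M_n = h_n^\top\Delta_n^{(\theta,T)} + o_{\P^{(\theta,T)}}(1)$, which combined with (ii) produces the claimed expansion.

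For step (i) I would split along the $\theta$- and $T$-directions,
\[
D_n(s) = \bigl[S_{\theta_n}(s/T_n)-S_\theta(s/T_n)\bigr] + \bigl[S_\theta(s/T_n)-S_\theta(s/T)\bigr],
\]
writing $h_n = (u_n,v_n) \in \R^d\times\R$. A first-order Taylor expansion in $\theta$ (available by (S1), (S2)) and the fundamental theorem of calculus in $T$ produce the leading pieces $n^{-1/2}u_n^\top\nabla_\theta S_\theta(s/T_n)$ and $-n^{-3/2}v_n\,s/(T_nT)\,S_\theta'(\tau)$ with $\tau$ between $s/T$ and $s/T_n$. Replacing $\nabla_\theta S_\theta(s/T_n)$ by $\nabla_\theta S_\theta(s/T)$ is exactly where the H{\"o}lder hypothesis (S6) enters: it yields
\[
\int_0^n |\nabla_\theta S_{(\theta,T_n)}(s)-\nabla_\theta S_{(\theta,T)}(s)|^2\,ds \le C\,n^\beta|T_n-T|^\alpha = O(n^{\beta-3\alpha/2}),
\]
and the range $\beta<1+3\alpha/2$ is sharply tuned so that, after the factor $n^{-1}$ from the scaling of $u_n$, the contribution to $\int_0^n R_n^2\,ds$ is of order $n^{-1}\cdot n^{\beta-3\alpha/2}=o(1)$. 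The residual pure-$\theta$ Taylor remainder is $O(n^{-1})$ in $\L^2([0,n])$, and replacing $S_\theta'(\tau)$ by $S_\theta'(s/T)$ is handled by continuity of $S_\theta'$ coming from (S1); (S4)--(S5) ensure that the pointwise $\L^2$-linearization on fixed intervals matches the formula for $\dot S_{(\theta,T)}$.

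For step (ii), a single Cauchy--Schwarz estimate combined with (i) reduces $Q_n$ to
\[
\int_0^n \frac{\bigl((\delta_n h_n)^\top\dot S_{(\theta,T)}(s)\bigr)^2}{\sigma(\eta_s)^2}\,ds.
\]
Expanding the square produces three terms of the form $n^{-(k+1)}\int_0^n s^k g_k(s/T,\eta_s)\,ds$ for $k\in\{0,1,2\}$, with integrands 1-periodic in the first coordinate. Lemma \ref{H(s)}, which rests on the Harris recurrence (H3) and the ergodic arguments of \cite{HK2,HK3}, identifies each such weighted average with $(k+1)^{-1}$ times the corresponding inner product against $\nu^{(\theta,T)}$, reproducing the three blocks of $F_{(\theta,T)}(1)$ in \eqref{Matrix} with their prefactors $1,\tfrac12,\tfrac13$. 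Step (iii) is then routine: the quadratic covariation of the continuous martingale $\Delta_n^{(\theta,T)}$ is exactly this same integral, and its convergence in probability to $F^{(\theta,T)}$, together with the automatic Lindeberg property (path-continuity together with (H2) and the $\L^2_{\text{loc}}$-bounds on $\dot S_{(\theta,T)}$), yields the multidimensional martingale CLT.

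The genuine obstacle lies in step (i) on the expanding interval $[0,n]$: (S4) is only a qualitative $\L^2_{\text{loc}}$-statement on fixed $[0,t]$, whereas the exceptional $n^{-3/2}$ rate in the $T$-coordinate reflects that perturbing the period rescales time and hence acts on $S_{(\theta,T)}(s)$ with amplification proportional to $s$, so derivatives must be controlled uniformly on a window growing like $n$. This is precisely what forces the quantitative H{\"o}lder assumption (S6), whose exponent range $\beta<1+3\alpha/2$ is sharply calibrated to the $n^{-3/2}$ scale. Once (i) is secured, the ergodic Lemma~\ref{H(s)} and the standard continuous-martingale CLT finish the argument cleanly.
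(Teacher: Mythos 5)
Your proposal follows essentially the same route as the paper: the same linearization of the log-likelihood via $\dot S_{(\theta,T)}$, the same three-way split of the $\L^2$-remainder (pure $\theta$ at the shifted period $T_n$, the cross term controlled by the H\"older condition (S6) with exactly the exponent arithmetic $n^{-1}\cdot n^{\beta-3\alpha/2}=o(1)$, and the pure $T$ term), Lemma \ref{H(s)} for the bracket and the Fisher information, and the continuous-martingale convergence theorem for the score. The one loose point is your unjustified claim that the pure-$\theta$ Taylor remainder is $O(n^{-1})$ in $\L^2([0,n])$: under (S4)--(S5) one only gets $o(1)$, and the argument requires the periodicity reduction $\int_0^{n}(\cdot)\,ds\le\left(\tfrac{n}{T_n}+1\right)\int_0^{T_n}(\cdot)\,ds$ of the $T_n$-periodic integrand to a single period before the fixed-interval $\L^2$-differentiability can be applied --- this is the step the paper makes explicit for its term $A_n$.
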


\begin{remark}
The above theorem naturally extends to the case of a $D$-dimensional signal, all of whose components satisfy (S1) - (S6), that is present in the drift of a $D$-dimensional diffusion driven by an $M$-dimensional Brownian Motion, $D,M\ge1$. Assumption (H2) has to be replaced by uniform ellipticity of $\sigma\sigma^\top$, where $\sigma\colon \R^D \to \R^{D\times M}$ is the volatility matrix of the corresponding diffusion equation. Lemma \ref{H(s)} below and assumption (S7) also need to be restated accordingly. Notation becomes far more complex in this case, but the general line of the proof remains unaltered.
\end{remark}

\textbf{Notational Convention:} For the remainder of this article, $(\theta,T)\in\Theta\times(0,\infty)$ will be \emph{fixed} and we drop corresponding indices (for $\mu, \nu, F, F', \ldots$) whenever there is no risk of ambiguity.

The proof of the Theorem makes use of the following Lemma, which is a simple consequence of Lemma 2.2 from \cite{HK3} and which we state explicitly for the sake of convenience. The case $k=0$ is not included in \cite{HK3}, but it follows easily with a simplified version of the same argument. The essential ingredient in this Lemma (and thus in Theorem \ref{LAN}) is a Strong Law of Large Numbers for certain functionals of path segments of Markov processes with the periodic ergodicity property (H3), see section 2 of \cite{HK2}.

\begin{lem}\label{H(s)}
Fix $(\theta,T)\in\Theta\times(0,\infty)$ and assume (H1) - (H3). For any 1-periodic bounded measurable function $f \colon [0,\infty) \to \R$ and $k \in \N_0$ we have
\[
	(k+1)t^{-(k+1)} \int_0^t s^k \frac{f\left(\frac{s}{T}\right)}{\sigma^{2}(\eta_s)}ds \xrightarrow{t \to \infty} \nu[f] \quad \text{$\P^{(\theta,T)}$-almost surely.}
\]
\end{lem}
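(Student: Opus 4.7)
The plan is to establish the case $k=0$ by a block decomposition combined with the SLLN from \cite{HK2}, and then to bootstrap to arbitrary $k\in\N_0$ by an integration-by-parts argument.

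For $k=0$, set $N_t:=\floor{t/T}$ and decompose
\[
	\int_0^t \frac{f(s/T)}{\sigma^2(\eta_s)}\,ds = \sum_{j=0}^{N_t-1} F_j + R_t, \qquad F_j:=\int_0^T \frac{f(r/T)}{\sigma^2(\eta_{jT+r})}\,dr,
\]
where I have used the $1$-periodicity of $f$ to rewrite $f(s/T)=f(r/T)$ for $s=jT+r$. The remainder $R_t$ is bounded by $T\norm{f}_\infty\norm{\sigma^{-2}}_\infty$ thanks to (H2) and the boundedness of $f$, and hence is $o(t)$. Each $F_j$ is a bounded measurable functional of the path segment $(\eta_{jT+r})_{r\in[0,T]}$, so by the SLLN for periodic ergodic Markov processes (\cite[Lemma 2.1]{HK2}), applicable under (H3),
\[
	\frac{1}{N_t}\sum_{j=0}^{N_t-1} F_j \xrightarrow{t\to\infty} \int_0^T f(r/T)\,\mu P_{0,r}(\sigma^{-2})\,dr = T\,\nu[f] \quad \P^{(\theta,T)}\text{-a.s.,}
\]
where the last equality follows from the substitution $r=sT$ and the definition \eqref{measure}. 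Combining with $N_t/t\to 1/T$ yields the claim for $k=0$.

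For $k\ge 1$, set $g(s):=f(s/T)/\sigma^2(\eta_s)$ and $G(t):=\int_0^t g(s)\,ds$, so that by the previous step, $G(t)=t\nu[f]+o(t)$ $\P^{(\theta,T)}$-almost surely. Integration by parts gives
\[
	\int_0^t s^k g(s)\,ds = t^k G(t) - k\int_0^t s^{k-1} G(s)\,ds.
\]
The first summand equals $t^{k+1}\nu[f]+o(t^{k+1})$. For the second, given $\eps>0$ the bound $\abs{G(s)-s\nu[f]}\le\eps s$ holds for all $s$ large enough, which delivers $\int_0^t s^{k-1} G(s)\,ds = \frac{t^{k+1}}{k+1}\nu[f]+o(t^{k+1})$. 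The difference therefore equals $\frac{t^{k+1}}{k+1}\nu[f]+o(t^{k+1})$, and the claim follows upon multiplying by $(k+1)t^{-(k+1)}$.

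The only non-routine ingredient is the SLLN for periodic ergodic Markov processes invoked at the $k=0$ step; its hypotheses reduce exactly to (H3) together with the boundedness and measurability of the block functional $F_j$, which are both immediate from (H2) and the boundedness of $f$. The remaining manipulations are elementary asymptotic bookkeeping, so no essential obstacle is expected beyond correctly citing the SLLN machinery.
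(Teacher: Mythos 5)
Your proof is correct, and it is worth noting that the paper itself does not actually prove this lemma: it simply declares it ``a simple consequence of Lemma 2.2 from \cite{HK3}'' (with the case $k=0$ following from ``a simplified version of the same argument'') and identifies the SLLN for functionals of path segments from section 2 of \cite{HK2} as the essential ingredient. Your argument supplies exactly the details the paper omits, and it rests on the same essential ingredient: the block decomposition into the functionals $F_j$ of the segments $(\eta_{jT+r})_{r\in[0,T]}$, the identification of the limit $\E_\mu[F_0]=\int_0^T f(r/T)\,\mu P_{0,r}(\sigma^{-2})\,dr=T\nu[f]$, and the uniform bound on the remainder via (H2) are all in order. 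Where you genuinely depart from the cited route is in handling $k\ge1$: rather than redoing the block argument with polynomial weights $s^k$ (which is essentially what \cite[Lemma 2.2]{HK3} does, via a weighted/Toeplitz-type average over the blocks), you bootstrap from the $k=0$ case by integration by parts, turning $G(t)=t\nu[f]+o(t)$ into the Ces\`aro-type statement for $\int_0^t s^{k-1}G(s)\,ds$. This is a clean and fully elementary reduction that avoids any further appeal to the ergodic machinery. The one point to tighten is the citation: what you need is the statement that under (H3) the chain of path segments (not merely the grid chain) is positive Harris recurrent, so that the SLLN applies to the $F_j$; this is the content of section 2 of \cite{HK2} (the paper cites \cite[Lemma 2.1]{HK2} only for the definition of the measure $\nu$), and it is a nontrivial but standard extension rather than an immediate consequence of (H3) alone.
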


\begin{proof}[Proof of Theorem \ref{LAN}]
1.) The main idea is to introduce a time step size $t>0$ into the log-likelihood and then for each $n \in \N$ interpret $\left(\Lambda_{tn}^{(\theta_n,T_n)/(\theta,T)}\right)_{t\ge0}$ as a continuous time stochastic process. Splitting it into several parts and applying the above Lemma together with tools from continuous time martingale theory will eventually lead to the desired quadratic expansion. Indeed, we write
\begin{align*}
	\Lambda_{tn}^{(\theta_n,T_n)/(\theta,T)}
	&= \int_0^{tn} \frac{S_{(\theta_n,T_n)}(s)-S_{(\theta,T)}(s)}{\sigma(\eta_s)}dW_s - \frac{1}{2}\int_0^{tn} \left(\frac{S_{(\theta_n,T_n)}(s)-S_{(\theta,T)}(s)}{\sigma(\eta_s)}\right)^2ds \\
	&= (\delta_nh_n)^\top \left( \int_0^{tn} \frac{\dot{S}_{(\theta,T)}(s)}{\sigma(\eta_s)}dW_s\right) \\
	& \quad - \frac12 (\delta_n h_n)^\top \left( \int_0^{tn} \frac{\dot{S}_{(\theta,T)}(s)\dot{S}_{(\theta,T)}(s)^\top}{\sigma^2(\eta_s)} ds \right) (\delta_nh_n) \\
	& \quad + \int_0^{tn} \frac{S_{(\theta_n,T_n)}(s)-S_{(\theta,T)}(s)-(\delta_nh_n)^\top\dot{S}_{(\theta,T)}(s)}{\sigma(\eta_s)}dW_s \\
	& \quad - \frac12\int_0^{tn} \left(\frac{S_{(\theta_n,T_n)}(s)-S_{(\theta,T)}(s)-(\delta_nh_n)^\top\dot{S}_{(\theta,T)}(s)}{\sigma(\eta_s)}\right)^2ds \\
	& \quad - \int_0^{tn} \frac{\big(S_{(\theta_n,T_n)}(s)-S_{(\theta,T)}(s)-(\delta_nh_n)^\top\dot{S}_{(\theta,T)}(s)\big) \big((\delta_nh_n)^\top\dot{S}_{(\theta,T)}(s)\big)}{\sigma^2(\eta_s)} ds \\
&=: h_n^\top \Delta_n(t)-\frac12 h_n^\top F_n(t)h_n+R_n(t)-\frac12 U_n(t)-V_n(t)
\end{align*}
and in order to prove the Theorem, we have to study convergence in distribution of $\Delta_n$ for $n \to \infty$ and show almost sure convergence of $F_n(1)$ to $F=F(1)$. Finally, we show that $R_n(t)$, $U_n(t)$ and $V_n(t)$ converge to zero in probability under $\P^{(\theta,T)}$.

2.) For any fixed $n\in\N$ the process
\[
	M_n:=(\Delta_n(t))_{t\ge0}=\left(\delta_n \int_0^{tn} \frac{\dot{S}_{(\theta,T)}(s)}{\sigma(\eta_s)}dW_s\right)_{t\ge0}
\]
is obviously an $\R^{d+1}$-valued local $\P^{(\theta,T)}$-martingale. In order to determine its weak limit for $n \to \infty$ in the Skorohod space $\cD([0,\infty);\R^{d+1})$, we first calculate its angle bracket. For $i,j \in \{1,\ldots,d\}$ we have
\[
	\inpro{M_n^i}{M_n^j}_t = \frac{1}{n} \int_0^{tn} \frac{\d_{\theta_i}S_{(\theta,T)}(s) \d_{\theta_j}S_{(\theta,T)}(s)}{\sigma^2(\eta_s)}ds
												 = t \cdot \frac{1}{tn} \int_0^{tn} \frac{\d_{\theta_i}S_\theta\left(\frac{s}{T}\right) \d_{\theta_j}S_\theta\left(\frac{s}{T}\right)}{\sigma^2(\eta_s)}ds
\]
and due to the periodicity of $S_\theta$ and by Lemma \ref{H(s)} with $k=0$ this expression converges to
\[
	t \nu[\d_{\theta_i}S_\theta\d_{\theta_j}S_\theta] = F_{i,j}(t)
\]
$\P^{(\theta,T)}$-almost surely for $n \to \infty$, where we used the notation introduced in \eqref{measure} and \eqref{Matrix}. The same argument with $k=1$ yields
\begin{align*}
	\inpro{M_n^j}{M_n^{d+1}}_t=\inpro{M_n^{d+1}}{M_n^j}_t &= n^{-2} \int_0^{tn} \frac{\d_TS_{(\theta,T)}(s)\d_{\theta_j}S_{(\theta,T)}(s)}{\sigma^2(\eta_s)}ds \\
														 &= \frac{-t^2}{2T^2} \cdot 2(tn)^{-2} \int_0^{tn} s \cdot \frac{S_\theta'\left(\frac{s}{T}\right) \d_{\theta_j}S_\theta\left(\frac{s}{T}\right)}{\sigma^2(\eta_s)}ds \\
														 & \xrightarrow{n \to \infty}  \frac{-t^2}{2T^2} \nu[S_\theta'\d_{\theta_j}S_\theta] = F_{{d+1},j}(t)=F_{j,{d+1}}(t)
\end{align*}
$\P^{(\theta,T)}$-almost surely and finally (with $k=2$)
\begin{align*}
	\inpro{M_n^{d+1}}{M_n^{d+1}}_t &= n^{-3} \int_0^{tn} \left(\frac{\d_TS_{(\theta,T)}(s)}{\sigma(\eta_s)}\right)^2ds = \frac{t^3}{3T^4} \cdot 3(tn)^{-3} \int_0^{tn} s^2 \cdot \frac{\left(S_\theta'\left(\frac{s}{T}\right)\right)^2}{\sigma^2(\eta_s)}ds \\
																 & \xrightarrow{n \to \infty} \frac{t^3}{3T^4} \nu[(S_\theta')^2] =F_{d+1,d+1}(t)
\end{align*}
$\P^{(\theta,T)}$-almost surely. In other words for all $t\ge0$ the quadratic variation $\langle M_n\rangle_t$ converges $\P^{(\theta,T)}$-almost surely to the matrix $F(t)$ as $n \to \infty$ and the Martingale Convergence Theorem \cite[Corollary VIII.3.24]{Jacod} implies weak convergence
\begin{equation}
	M_n \xrightarrow{\cL} M \quad \text{in $\cD([0,\infty);\R^{d+1})$}
\label{weak}
\end{equation}
to some limit martingale $M=(M(t))_{t\ge0}$. By (S7), $F'(t)$ is invertible and as it is symmetric and nonnegative-definite, it possesses a square root, i.e. there is some uniquely determined matrix $A=:\sqrt{F'(t)} \in \R^{(d+1)\times(d+1)}$ with $AA=F'(t)$. Thus the Representation Theorem \cite[Theorem II.7.1]{Ikeda} yields that $M$ can be expressed as
\[
	M(t)=\int_0^t \sqrt{F'(s)} dB_s, \quad t\ge0,
\]
with some $(d+1)$-dimensional standard Brownian motion $B$. Together with \eqref{weak} this also implies weak convergence
\[
	\cL\left(M_n(t)\middle|\P^{(\theta,T)}\right) \to \cL\left(M(t)\middle|\P^{(\theta,T)}\right)=\cN\left(0,\int_0^t F'(s)ds\right)=\cN\left(0,F(t)\right) \quad \text{for all $t\ge0$.}
\]
In particular we have weak convergence of $\Delta_n=M_n(1) \to \cN(0,F(1))=\cN(0,F)$.

3.) In the second step we have shown on the fly $\P^{(\theta, T)}$-almost sure convergence of $F_n(1) = \langle M_n\rangle_1$ to $\langle M\rangle_1=F(1)$ for $n\to\infty$.

4.) It remains to show convergence to $0$ of the remainder terms $R_n,U_n$ and $V_n$ introduced at the very beginning of this proof. Therefore, we consider the sequence
\[
	R_n:=(R_n(t))_{t\ge0}=\left(\int_0^{tn} \frac{S_{(\theta_n,T_n)}(s)-S_{(\theta,T)}(s)-(\delta_nh_n)^\top\dot{S}_{(\theta,T)}(s)}{\sigma(\eta_s)}dW_s\right)_{t\ge0}, \quad n\in\N,
\]
of local $\P^{(\theta,T)}$-martingales. Using that by (H2) the volatility $\sigma$ is bounded away from $0$, we estimate
\begin{align*}
 \langle R_n \rangle_t =U_n(t)&= \int_0^{tn} \left(\frac{S_{(\theta_n,T_n)}(s)-S_{(\theta,T)}(s)-(\delta_nh_n)^\top\dot{S}_{(\theta,T)}(s)}{\sigma(\eta_s)}\right)^2ds \\
& \le \frac{3}{\inf \sigma^2} \bigg( \int_0^{tn} \big(S_{(\theta_n,T_n)}(s)-S_{(\theta,T_n)}(s)-(\theta_n-\theta)^\top\nabla_\theta S_{(\theta,T_n)}(s)\big)^2ds \\
    & \quad + \int_0^{tn} \big((\theta_n-\theta)^\top(\nabla_\theta S_{(\theta,T_n)}-\nabla_\theta S_{(\theta,T)}(s))\big)^2ds \\
		& \quad + \int_0^{tn} \big(S_{(\theta,T_n)}(s)-S_{(\theta,T)}(s)-(T_n-T) \d_T S_{(\theta,T)}(s)\big)^2ds \bigg) \\
 & =:	\frac{3}{\inf \sigma^2} (A_n+B_n+C_n).
\end{align*}
Let $H:=\sup_{n\in\N}\abs{h_n}$. For sufficiently large $n\in\N$ we have $T_n\in[T/2,2T]$ and thus
\begin{align*}
	A_n 
			&\le \left(\frac{tn}{T_n}+1\right)\int_0^{T_n} \big(S_{(\theta_n,T_n)}(s)-S_{(\theta,T_n)}(s)-(\theta_n-\theta)^\top\nabla_\theta S_{(\theta,T_n)}(s)\big)^2ds \\
			&= \left(\frac{tn}{T_n}+1\right) \abs{\theta_n-\theta}^2 \int_0^{T_n}\!\left(\frac{S_{(\theta_n,T_n)}(s)-S_{(\theta,T_n)}(s)-(\theta_n-\theta)^\top\nabla_\theta S_{(\theta,T_n)}(s)}{\abs{\theta_n-\theta}} \right)^2\! ds \\
	&\le \left(\frac{tn}{T/2}+1\right) H^2n^{-1}\int_0^{2T} \!\left(\frac{S_{(\theta_n,T_n)}(s)-S_{(\theta,T_n)}(s)-(\theta_n-\theta)^\top\nabla_\theta S_{(\theta,T_n)}(s)}{\abs{\theta_n-\theta}} \right)^2\! ds,
\end{align*}
where the leading factor is obviously bounded and the integral tends to $0$ because of the $\L^2$-continuity condition (S5).

Next, using the H{\"o}lder condition (S6), we have for sufficiently large $n\in\N$
\begin{align*}
	B_n &\le \abs{\theta_n-\theta}^2 \int_0^{tn} \abs{\nabla_\theta S_{(\theta,T_n)}(s)-\nabla_\theta S_{(\theta,T)}(s)}^2ds \\
	&\le H^2n^{-1} \left(\int_0^{t_0} \abs{\nabla_\theta S_{(\theta,T_n)}(s)-\nabla_\theta S_{(\theta,T)}(s)}^2ds + C (tn)^\beta \abs{T_n-T}^\alpha \right) \\
	&\le H^2n^{-1} \int_0^{t_0} \abs{\dot S_{(\theta,T_n)}(s)-\dot S_{(\theta,T)}(s)}^2ds + C H^{2+\alpha} t^\beta n^{\beta-(1+3\alpha/2)}.
\end{align*}
The particular conditions on $\alpha$ and $\beta$ make the second summand vanish for $n\to\infty$, while the first summand converges to $0$ because of (S5).

In order to estimate $C_n$, we make explicit use of the $C^2$-property (S1), which is readily translated into the condition that the mapping
\[
	(0,\infty)\ni T \mapsto S_{(\theta,T)}(s)
\]
is twice continuously differentiable for any $s\in(0,\infty)$. Consequently, for every $s \in (0,\infty)$ Taylor expansion provides a $\rho=\rho(s,\theta,T,T_n,h_n)$ between $T$ and $T_n$ such that for sufficiently large $n \in \N$
\begin{align*}
	\abs{S_{(\theta,T_n)}(s)-S_{(\theta,T)}(s)-(T_n-T) \d_T S_{(\theta,T)}}	&=\frac12(T_n-T)^2\abs{\d_{TT}S_{(\theta,T)}(s)_{|_{T=\rho}}} \\
	&=\frac12 \left(h_n^{d+1}n^{-3/2}\right)^2\abs{\frac{s^2}{\rho^4} S_\theta''\left(\frac{s}{\rho}\right)+\frac{2s}{\rho^3} S_\theta'\left(\frac{s}{\rho}\right)} \\
	&\le \frac12 H^2 n^{-3}\left( s^2\frac{\norm{S_\theta''}_\infty}{(T-n^{-3/2}H)^4} +s\frac{2\norm{S_\theta'}_\infty}{(T-n^{-3/2}H)^3}\right) \\
	&\le \tilde c n^{-3}(s^2+s)
\end{align*}
for some positive constant $\tilde c$ (not depending on $s$ or $n$) and thus
\[
	C_n \le \tilde c^2 n^{-6}\int_0^{tn}(s^2+s)^2ds \xrightarrow{n \to \infty} 0.
\]
So far, we have shown that the sequence of random variables $(U_n(t))_{n\in\N}$ is bounded by a deterministic sequence which goes to zero as $n \to \infty$. Via the Burkholder-Davis-Gundy inequality this yields
\[
	\E^{(\theta,T)}\left[ \sup_{s\le t} \abs{R_n(s)}^2\right] \le 4 \E^{(\theta,T)}\left[\langle R_n\rangle_t\right] = 4 \E^{(\theta,T)}\left[ U_n(t)\right] \xrightarrow{n \to \infty} 0,
\]
so $R_n(t)$ vanishes in probability under $\P^{(\theta,T)}$ for $n \to \infty$. Finally, the same is true for the last remainder variable $V_n(t)$, as by Cauchy-Schwarz
\[
	\abs{V_n(t)}^2\le U_n(t) h_n^\top F_n(t)h_n \le U_n(t) H^2 \abs{F_n(t)}. 
\]
Taking $t=1$ completes the proof.
\end{proof}

\begin{remark}
If the shape parameter $\theta$ is assumed to be known, our Theorem includes \cite[Theorem 1.1]{HK3} as a special case (only (H1) - (H3) and (S1) are actually needed in this situation). If on the other hand, the periodicity is known and the only parameter of interest is $\theta$, then our Theorem leads to the same conclusion as \cite[Theorem 2.1]{HK1} (note that other than in our Theorem, here Score and Fisher Information are written at a time scale given by multiples of the known periodicity $T$). There, the $\L^2$-smoothness conditions on the signal are formulated under what is the measure $\nu^{(\theta,T)}$ in our notation, which under (H2) makes them slightly weaker than (S4) - (S6). However, if (H2) holds (which is more or less the only verifiable condition for $\nu^{(\theta,T)}$ to be finite, as supposed in \cite{HK1} anyway) the most obvious way to verify these is using that $\nu^{(\theta,T)}$ is thus bounded from above by a constant multiple of Lebesgue's measure, so the difference of the assumptions is just of a very theoretical nature. The key to bringing these results together in the above Theorem is the H{\"o}lder condition (S6), which is crucial for dealing with the term $B_n$ in step 3.) of the proof. This is the only instant where (in contrast to the terms $A_n$ and $C_n$) we have to impose more than just 'joint smoothness', but a more specific relation of the interplay between $T$ and $\theta$. It should also be noted that (H2) is essential for this step, as it removes any randomness from the terms we effectively deal with. Otherwise even if we would reformulate (S4) - (S6) in $\L^2(\nu^{(\theta,T)})$, we could not treat this term with Lemma \ref{H(s)} due to the occurrence of different periodicities in the integrand.
\end{remark}


\begin{example}
Consider the case
\[
	b(x)=-\beta x \quad \text{for some $\beta>0$,} \qquad \sigma(\cdot)\equiv\sigma>0 \quad \text{constant,}
\]
i.e. $\xi$ is a mean-reverting Ornstein-Uhlenbeck process with mean-reversion speed $\beta$ and time-dependent mean-reversion level $\beta^{-1} S_{(\theta,T)}(t)$. For the sake of simplicity, let us assume that $\sigma=1$. By \cite[Example 2.3]{HK2}, the periodic ergodicity assumption (H3) is fulfilled and we see that $\nu$ is simply Lebesgue's measure. In \cite{DFK} the authors think of $\beta$ as another unknown parameter, while they assume the periodicity $T$ to be fixed and known. In order to apply our results, we suppose that both $\beta$ and $T$ are fixed and known, while $\theta$ is to be estimated. The signal the authors consider is then the second one introduced in Example \ref{signal}. In this setting, we see that the Fisher Information $F$ is just the unit matrix and the Score is given by
\[
	\Delta_n=n^{-1/2} \left(\int_0^n \phi_i(s)dW_s\right)_{i=1,\ldots,d}. 
\]
Proposition 4.1 of \cite{DFK} implies that the rescaled estimation error $\sqrt{n}(\hat\theta_n-\theta)$ of the maximum likelihood estimator $\hat\theta_n$ is exactly the central statistic $Z_n=F^{-1}\Delta_n=\Delta_n$. Combining this with our Theorem \ref{LAN}, we see that in the sense of the Local Asymptotic Minimax Theorem (\cite[Theorem 7.12]{HoBo}) $\hat\theta_n$ is in fact optimal with rate $\sqrt{n}$ (cf. \cite[Theorem 2]{DFK}).
\end{example}

\begin{example}
More generally, for $\sigma\equiv 1$ and any measurable $b\colon\R\to\R$ the process $X=(X_t)_{t\ge0}$ defined by
\[
	X_t:=\xi_t - \int_0^t b(\xi_s)ds,
\]
is obviously a solution to the 'signal in white noise' equation
\begin{equation}\label{noise}
	dX_t=S_{(\theta,T)}(t)dt+dW_t, \quad t \in [0,\infty).
\end{equation}
We will now discuss some known results about this equation. Note that even if $\xi$ satisfies the ergodicity assumption (H3), $X$ does not. Ibragimov and Khasminskii treat the case where $\theta$ is fixed and known and $T$ is to be estimated (see \cite[p. 209-211]{Ibra}). They show asymptotic normality and efficiency for the maximum likelihood and Bayesian estimators with a normalization factor that coincides asymptotically with  
\[
	(\delta_n)_{d+1,d+1}^{-1} \left( F_{d+1,d+1} \right)^{-1/2},
\]
when translated into our notation (note that they use a different parametrization: 'our $T$' takes the place of 'their $\vartheta^{-1}$', explaining the different constants appearing). So both rate and limit variance are the right ones in the sense of the Local Asymptotic Minimax Theorem. Golubev (\cite{Golubev}, or see \cite{CLM} for a more detailed probabilistic explanation) gives an estimator for $T$ under unknown infinite-dimensional $\theta$ (the vector of the Fourier-coefficients of the signal) which he proves to be asymptotically normal and efficient, where the normalization factor is (when translated into our notation) given by
\[
	n^{3/2} \left( \frac{1}{12 T^4} \int_0^1 (S_\theta'(s))^2ds \right)^{1/2}= (\delta_n)_{d+1,d+1}^{-1} \left(\frac14 F_{d+1,d+1} \right)^{-1/2}.
\]
So while the rate is indeed $\delta_n$, the limit variance for Golubev's estimator apparently differs from the optimal value by a factor of 4. This is due to the fact that he studies a slightly different model in which the driving Brownian motion is two-sided and the process is observed over time intervals $[-n/2,n/2]$ and not $[0,n]$. This can be interpreted as two independent 'signal in white noise' models $X^{(1)},X^{(2)}$ each being observed over the interval $[0,n/2]$, where $X^{(1)}$ follows \eqref{noise} and $X^{(2)}$ follows \eqref{noise} with the signal replaced by the same signal run backwards in time. Obviously, $X^{(1)}$ and $X^{(2)}$ both generate the same Fisher Information $F_{d+1,d+1}(1/2)$, using the notation of the proof of Theorem \ref{LAN}. As a consequence of the independence structure, the Fisher Information in the experiment arising from observation of $(X^{(1)},X^{(2)})$ indeed turns out to be
\[
	2 \cdot F_{d+1,d+1}(1/2) = 2 \cdot (1/2)^3F_{d+1,d+1}(1)= \frac14 F_{d+1,d+1}.
\]
\end{example}


\begin{thebibliography}{Lam00}
	\bibitem{CLM} \textsc{I. Castillo, C. L\'{e}vy-Leduc, C. Matias}: Exact Adaptive Estimation of the Shape of a Periodic Function with Unknown Period Corrupted by White Noise. In: \textit{Math. Methods Statist.} No. 15 (2006), pp. 1-30.
	\bibitem{Davies} \textsc{R. Davies}: Asymptotic Inference When the Amount of Information Is Random. In: \textit{Proc. of the Berkeley Symposium in Honour of J. Neyman and J. Kiefer} Vol. II, Wadsworth, 1985.	
	\bibitem{DFK} \textsc{H. Dehling, B. Franke, T. Kott}: Drift Estimation for a Periodic Mean Reversion Process. In: \textit{Stat Inference Stoch Process} No. 13 (2010), pp. 175-192.
	\bibitem{Golubev} \textsc{G. Golubev}: Estimating the Period of a Signal of Unknown Shape Corrupted by White Noise. In: \textit{Problems in Inform. Transmission} No. 24 (1988), pp. 38-52.
	\bibitem{HoBo} \textsc{R. H{\"o}pfner}: Asymptotic Statistics with a View to Stochastic Processes. de Gruyter, 2014.
	\bibitem{HK1} \textsc{R. H{\"o}pfner, Y. Kutoyants}: On LAN for Parametrized Continuous Periodic Signals in a Time Inhomogeneous Diffusion. In: \textit{Statistics \& Decisions} No. 27 (2009), pp. 309-326.
	\bibitem{HK2} \textsc{R. H{\"o}pfner, Y. Kutoyants}: Estimating Discontinuous Periodic Signals in a Time Inhomogeneous Diffusion. In: \textit{Stat Inference Stoch Process} No. 13 (2010), pp. 193-230.
	\bibitem{HK3} \textsc{R. H{\"o}pfner, Y. Kutoyants}: Estimating a Periodicity Parameter in the Drift of a Time Inhomogeneous Diffusion. In: \textit{Mathematical Methods of Statistics} No. 20 (2011), pp. 58-74.
	\bibitem{HL1} \textsc{R. H{\"o}pfner, E. L{\"o}cherbach}: On some ergodicity properties for time inhomogeneous Markov processes with $T$-periodic semigroup. arXiv:1012.4916 [math.PR]
	\bibitem{Ibra} \textsc{I.A. Ibragimov, R.Z. Khasminskii}: Statistical Estimation. Springer, 1981.
	\bibitem{Ikeda} \textsc{N. Ikeda, S. Watanabe}: Stochastic Differential Equations and Diffusion Processes. North-Holland Library, 2nd edition, 1989.
	\bibitem{Jacod} \textsc{J. Jacod, A. Shiryaev}: Limit Theorems for Stochastic Processes. Springer, 2nd edition, 2002.
	\bibitem{Kut} \textsc{Y. Kutoyants}: Statistical Inference for Ergodic Diffusion Processes. Springer, 2004.
	\bibitem{LeCam} \textsc{L. LeCam, G. Yang}: Asymptotics in Statistics. Some Basic Concepts. Springer, 1990.
\end{thebibliography}
\end{document}